\renewcommand{\pod}[1]{\allowbreak\mathchoice
	{\if@display \mkern 18mu\else \mkern 8mu\fi (#1)}
	{\if@display \mkern 18mu\else \mkern 8mu\fi (#1)}
	{\mkern4mu(#1)}
	{\mkern4mu(#1)}
}
\newcommand{\Z}{\mathbb{Z}}
\newcommand{\R}{\mathbb{R}}
\newcommand{\N}{\mathbb{N}}
\renewcommand{\epsilon}{\varepsilon}
\theoremstyle{plain}
\newtheorem{theorem}{Theorem}
\newtheorem{lemma}{Lemma}
\theoremstyle{remark}
\newtheorem{remark}{Remark}
\theoremstyle{definition}
\numberwithin{equation}{section}
\begin{document}
	\title{\textbf{Quadratic forms connected with Fourier coefficients of holomorphic and Maass cusp forms}}\date{}
	\author{Giamila Zaghloul}
	\affil{Dipartimento di Matematica, Università degli studi di Genova}
	\maketitle
	
	\begin{abstract}
		In this work we prove a prime number type theorem involving the normalised Fourier coefficients of holomorphic and Maass cusp forms, using the classical circle method. A key point is in a recent paper of Fouvry and Ganguly, based on  Hoffstein-Ramakrishnan's result about the non-existence of the Siegel zeros for $GL(2)$ $L$-functions, which allows us to improve preceding estimates.
	\end{abstract}
	\section{Introduction}
	The aim of this work is to prove a prime number type theorem involving the normalised Fourier coefficients of holomorphic and Maass cusp forms. Our result improves a recent bound obtained by Hu in \cite{hu} (which could have been reached with a simpler argument).\\
	The main tool used here is, as in Hu's paper, the classical circle method, but we give a different definition of the major arcs, as we shall see. Another key ingredient in our work is a result due to Fouvry and Ganguly (\cite{f-g}), based on Hoffstein-Ramakrishnan's result \cite[Section 5]{h-r} about the non-existence of the Siegel zeros for $GL(2)$ $L$-functions. Some of the estimates we need have been proved in detail by Hu, so we refer to \cite{hu} for the complete arguments. We first discuss a few preliminary results, underlining the differences with preceding estimates, and then we focus on the proof of the main theorem. \\

	In the following, we will assume $f$ to be either a holomorphic or a Maass cusp form, with normalised Fourier coefficients given by the sequence $(a(n))$, $n\geq 1$. We consider a definition of Maass forms general enough to include holomorphic cusp forms, in order to proceed with a unified argument (see \cite[Section 2.1]{f-g} for details). Moreover, we can suppose that $f$ is a \emph{Hecke-Maass} cusp form, that is an element of the so-called Hecke basis of the space of Maass forms. In particular, we will focus on Maass cusp forms for the full modular group $SL_2(\Z)$, which are trivially \emph{primitive forms} (again according to the notation of \cite[Section 2.3]{f-g}) and these include holomorphic cusp forms.\\
	We recall that for Maass forms the Ramanujan's conjecture, that is 
	\[
	|a(p)|\leq 2
	\]
	for $p$ prime, is still open, but we know that
	\[
	|a(p)|\leq 2p^{\theta}\quad\text{with}\quad\theta=\frac{7}{64}.
	\] 
	Hence, the coefficients of Maass cusp forms satisfy, for all $\epsilon>0$,
	\[
	a(n)=O(n^{\theta+\epsilon}),
	\]
	while it is well known that the bound 
	\[
	a(n)=O(n^{\epsilon})
	\]
	holds for holomorphic cusp forms (this result is due to Deligne). \\
	
		Moreover, let $\Lambda$ be the classical von Mangoldt function, 
		\[
		\Lambda(n)=\begin{cases}
		\log{p} \quad&\text{if}\quad n=p^m\quad \text{with}\quad m\geq 1\\
		0&\text{otherwise}
		\end{cases}
		\]
		and define
		\[ 
		\pi_{a,\Lambda}(x)=\sum_{m^2_1+m^2_2+m^2_3\leq x}a(m^2_1+m^2_2+m^2_3)\Lambda(m^2_1+m^2_2+m^2_3).
		\]
		In his paper, Hu proves a bound of the form
		\begin{equation}\label{1.1}
		\pi_{a,\Lambda}(x)=O(x^{3/2}\log^c x),
		\end{equation}
		where $c$ is a suitable positive constant. We claim that, taking into account the absence of exceptional zeros proved by Hoffstein-Ramakrishnan and a consequent estimate of \cite{f-g}, a stronger bound can be obtained. More precisely, our purpose is to prove the following statement.
		\begin{theorem}\label{theorem 1}
			Let $a(n)$ be as above. There exists a constant $c>0$ such that 
			\[
			\pi_{a,\Lambda}(x)=O(x^{3/2}\exp(-c\sqrt{\log{x}})).
			\]
		\end{theorem}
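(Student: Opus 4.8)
The plan is to run the Hardy–Littlewood circle method for the sum
\[
\pi_{a,\Lambda}(x)=\sum_{n\le x} r_3(n)\,a(n)\Lambda(n),
\]
where $r_3(n)$ counts representations of $n$ as a sum of three squares. Writing $r_3(n)$ via the generating function $\theta(\alpha)^3$ with $\theta(\alpha)=\sum_{|m|\le\sqrt{x}}e(m^2\alpha)$, and setting $S(\alpha)=\sum_{n\le x}a(n)\Lambda(n)e(n\alpha)$, the quantity $\pi_{a,\Lambda}(x)$ becomes $\int_0^1 \theta(\alpha)^3 S(-\alpha)\,d\alpha$. First I would fix a Farey dissection at level $Q$ (with $Q$ a suitable power of $x$, chosen at the end) and split $[0,1)$ into major arcs $\mathfrak{M}$ around fractions $a/q$ with $q$ small and minor arcs $\mathfrak{m}$. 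The novelty announced in the introduction is that the major arcs here should be taken with a cutoff governed by the zero-free region of the $GL(2)$ $L$-function $L(f,s)$ rather than the cruder cutoff used by Hu; this is where the Hoffstein–Ramakrishnan input (no Siegel zero) and the Fouvry–Ganguly estimate enter.

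On the minor arcs I would bound $\theta(\alpha)$ pointwise by the standard Weyl-type estimate (Gauss sums / Weyl differencing for the quadratic exponential sum), giving $\theta(\alpha)\ll x^{1/2+\epsilon}(q^{-1}+x^{-1/2}+q x^{-1})^{1/2}$ on a neighbourhood of $a/q$, and control $S(\alpha)$ in mean square via Parseval: $\int_0^1|S(\alpha)|^2\,d\alpha=\sum_{n\le x}|a(n)|^2\Lambda(n)^2\ll x\log^2 x$ (using the Rankin–Selberg bound $\sum_{n\le x}|a(n)|^2\ll x$, valid for both holomorphic and Maass $f$). Combining a sup bound for $\theta(\alpha)^3$ on $\mathfrak{m}$ with an $L^2$ bound for $\theta$ and the $L^2$ bound for $S$ via Cauchy–Schwarz — i.e. $\int_{\mathfrak{m}}|\theta|^3|S|\le (\sup_{\mathfrak m}|\theta|)\,\|\theta\|_2\,\|S\|_2$ — yields a minor-arc contribution of size $x^{3/2}$ times a negative power of $Q$ (up to $x^\epsilon$), which is acceptable provided $Q$ is a large enough power of $x$; the point $\theta\in7/64$ for Maass forms costs only an $x^{7/64+\epsilon}$ factor that is absorbed once $x^\epsilon$ is sharpened to the exponential saving at the end. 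Much of this is already in Hu's paper, so I would cite \cite{hu} for the routine parts.

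The heart of the matter, and the source of the improvement, is the major arcs. On $\mathfrak{M}$ one replaces $\theta(\alpha)$ near $a/q$ by its main term $q^{-1}G(a,q)\,v(\beta)$ (Gauss sum times the integral $v(\beta)=\int e(\xi^2\beta)\,d\xi$ over $|\xi|\le\sqrt x$), so that $\theta(\alpha)^3\approx q^{-3}G(a,q)^3 v(\beta)^3$, and one is left to understand
\[
\sum_{q\le Q}\ \sum_{\substack{a\bmod q\\(a,q)=1}} q^{-3}G(a,q)^3 \int_{|\beta|\le 1/(qQ)} v(\beta)^3\,S\!\left(-\tfrac aq-\beta\right)\,d\beta .
\]
Here $S(a/q+\beta)$ is an exponential sum over $a(n)\Lambda(n)$ twisted by $e(\beta n)$ and by additive characters mod $q$; expanding the additive character into Dirichlet characters $\chi\bmod q$ turns this into sums $\sum_{n}a(n)\chi(n)\Lambda(n)e(\beta n)$, i.e. the twisted von Mangoldt sum for the $L$-function $L(f\otimes\chi,s)$. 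The key estimate from \cite{f-g}, which rests on \cite{h-r}, is that $L(f\otimes\chi,s)$ has a standard zero-free region of Vinogradov–Korobov or at least classical type \emph{with no exceptional (Siegel) real zero}, uniformly for $q$ up to a power of $\log x$ (or a small power of $x$). Feeding this into a contour-shift / explicit-formula argument for $\sum_{n\le x}a(n)\chi(n)\Lambda(n)$ gives a bound $O(x\exp(-c\sqrt{\log x}))$ for each twisted sum, with the exceptional-zero term \emph{genuinely absent} — this is exactly the gain over Hu, whose major-arc treatment only gave a power of $\log x$ saving because it had to accommodate a potential Siegel zero. Summing over $\chi$, over $a$ (the Gauss-sum factor $q^{-3}G(a,q)^3$ being $O(q^{-3/2})$ on average), over $q\le Q$, and integrating the harmless factor $v(\beta)^3$ in $\beta$, the major arcs contribute $O(x^{3/2}\exp(-c\sqrt{\log x}))$ as well.

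I expect the main obstacle to be the uniformity in $q$: the Fouvry–Ganguly zero-free region, and hence the exponential saving in the twisted prime-counting sum, is only available for $q$ in a restricted range, so the major arcs cannot be extended too far; conversely the minor-arc bound needs $Q$ reasonably large to make $Q^{-\delta}$ small. Balancing these — i.e. choosing $Q$ a suitable power of $\log x$, or showing that the intermediate arcs $q$ between the two ranges can be absorbed either into a weaker minor-arc bound or via a large-sieve/hybrid estimate for $L(f\otimes\chi,s)$ — is the delicate point, and is presumably where the precise definition of the major arcs (advertised as differing from Hu's) does its work. Once the ranges are matched, assembling $\mathfrak M$ and $\mathfrak m$ gives $\pi_{a,\Lambda}(x)=O(x^{3/2}\exp(-c\sqrt{\log x}))$, completing the proof of Theorem~\ref{theorem 1}.
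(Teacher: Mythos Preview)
Your overall plan is the paper's: circle method, major arcs via the Fouvry--Ganguly bound $\sum_{n\le x}a(n)\chi(n)\Lambda(n)\ll \sqrt{q}\,x\exp(-A\sqrt{\log x})$ (resting on Hoffstein--Ramakrishnan), minor arcs by extracting a sup of the theta-sum and applying Cauchy--Schwarz. Two concrete points would fail as written.

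First, the minor-arc inequality $\int_{\mathfrak m}|\theta|^3|S|\le(\sup_{\mathfrak m}|\theta|)\,\|\theta\|_2\,\|S\|_2$ is not valid: the left side carries three factors of $\theta$, the right side only two. The paper pulls out one sup factor and then applies Cauchy--Schwarz to $\int|\theta|^2|S|$, getting
\[
\int_{\mathfrak m}|\theta|^3|S|\ \le\ \bigl(\sup_{\mathfrak m}|\theta|\bigr)\Bigl(\int_0^1|\theta|^4\Bigr)^{1/2}\Bigl(\int_0^1|S|^2\Bigr)^{1/2},
\]
and uses the fourth moment $\int_0^1|\theta|^4\ll x\log^3 x$ (counting $m_1^2+m_3^2=m_2^2+m_4^2$). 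If you instead pull out two sup factors and use $\|\theta\|_2\asymp x^{1/4}$ you obtain $x^{7/4}P^{-1}$, which would force the major-arc cutoff $P\gg x^{1/4}$ and wreck the major-arc side. The $7/64$ exponent plays no role on the minor arcs; it only appears in the harmless error $O(x^{\theta+\epsilon}\log^2 x)$ coming from the finitely many $n$ with $(n,q)>1$.

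Second, the cutoff you are unsure about is neither a power of $x$ nor a power of $\log x$: the paper takes $P=\exp(C\sqrt{\log x})$ (and $Q=x/P$). With this choice the Fouvry--Ganguly estimate, after expanding into characters and summing over $q\le P$, gives the major-arc contribution $\ll x^{3/2}\exp\bigl(-(A-\tfrac52 C)\sqrt{\log x}\bigr)$, while the minor-arc sup bound $\theta\ll x^{1/2}P^{-1/2}$ combined with the fourth-moment argument gives $\ll x^{3/2}\exp(-c'\sqrt{\log x})$. Choosing $C<\tfrac{2}{5}A$ makes both sides acceptable simultaneously, so there is no intermediate range left over and no large-sieve or hybrid input is needed; the ``delicate balancing'' you anticipate dissolves once $P$ is set at this scale.
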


		\section{Notation and outline of the method}
		Since we follow Hu's approach to the problem and mainly refer to \cite{hu} for the details, we introduce the same notation. For $\alpha\in \R$ and $y>1$, define
		\[
		S_1(\alpha,y)=\sum_{1\leq m\leq y}e(m^2\alpha),\qquad S_2(\alpha,y)=\sum_{|m|\leq y}e(m^2\alpha)
		\]
		and the exponential sum 
		\[T(\alpha,y)=\sum_{1\leq n\leq y}a(n)\Lambda(n)e(n\alpha).\]\\ \\
		As a first step, we have
		\[\pi_{a,\Lambda}(x)=\int_{0}^{1}S^3_2(\alpha,\sqrt{x})T(-\alpha,x)d\alpha.\] 
		Moreover, by $S_2(\alpha,y)=2S_1(\alpha,y)+1$, we get, for all $\epsilon>0$,
		\[
		\pi_{a,\Lambda}(x)=8\int_{0}^{1}S^3_1(\alpha,\sqrt{x})T(-\alpha,x)d\alpha+O(x^{1+\theta+\epsilon}).
		\]
		Let now $P=\exp(C\sqrt{\log x})$, where $C$ is a positive constant,
		and let $Q=xP^{-1}=x\exp(-C\sqrt{\log x})$. Thanks to the periodicity of the integrand, 
		\begin{equation}\label{2.1}
		\begin{split}
		\pi_{a,\Lambda}(x)&=8\int_{0}^{1}S^3_1(\alpha,\sqrt{x})T(-\alpha,x)d\alpha+O(x^{1+\theta+\epsilon})\\&=8\int_{1/Q}^{1+1/Q}S^3_1(\alpha,\sqrt{x})T(-\alpha,x)d\alpha+O(x^{1+\theta+\epsilon}). 
		\end{split}
		\end{equation}
		
		Dirichlet's theorem on rational approximation assures that for any real $\alpha$ and any $Q\geq 1$ there exists a rational number $a/q$, with $(a,q)=1$, $1\leq q\leq Q$ such that 
		\[
		\alpha=\frac{a}{q}+\beta\quad\text{with}\quad |\beta|\leq \frac{1}{qQ}.
		\]
		For $1\leq q\leq P$ and $1\leq a\leq q$, $(a,q)=1$ let 
		\[
		\emph{M}(a,q)=\bigg[\frac{a}{q}-\frac{1}{qQ},\frac{a}{q}+\frac{1}{qQ}\bigg].
		\]
		The set $\textit{M}$ of the \emph{major arcs} is given by the union of the above defined $\emph{M}(a,q)$, as $a,q$ run in the proper ranges. As usual, the set $\emph{m}$ of the \emph{minor arcs} is instead
		\[\textit{m}=\bigg[\frac{1}{Q},1+\frac{1}{Q}\bigg]\setminus\textit{M}.\] 
		Hence, by equation \eqref{2.1}
		\begin{equation}\label{2.2}
		\pi_{a,\Lambda}(x)=8\int_{\textit{M}}S^3_1(\alpha,\sqrt{x})T(-\alpha,x)d\alpha+8\int_{\textit{m}}S^3_1(\alpha,\sqrt{x})T(-\alpha,x)d\alpha+O(x^{1+\theta+\epsilon}).
		\end{equation}
		Our goal is now to give an estimate of the integrand on major and minor arcs. 
		
		\section{First estimates}
		In this section we first state a couple of results about $S_1(\alpha,\sqrt{x})$ on major and minor arcs. For the proof of these facts we refer to \cite{hu}, Lemma $4.1$ and $4.2$ respectively. Then, we will discuss a key result that will allow us to improve the estimate of $T(\alpha,x)$ on major arcs. \\
		
		We start introducing the Gauss sum. For $a,b\in \Z$, $q\in\N$ let 
		\[
		G(a,b,q)=\sum_{r=1}^{q}e\bigg(\frac{ar^2+br}{q}\bigg).
		\]
		Suppose that $q\geq 1$ and $(a,q)=1$, then the above sum satisfies (see \cite[Lemma 3.5]{hu})
		\[
		G(a,b,q)\ll \sqrt{q}.
		\]
		\begin{lemma}\label{lemma 1}
			\textbf{(Estimate of $S_1(\alpha,\sqrt{x})$ on major arcs)}\\
			Let $1\leq q\leq P$, $1\leq a\leq q$, $(a,q)=1$ and $\alpha=\frac{a}{q}+\beta\in M(a,q)$ with $|\beta|\leq \frac{1}{qQ}$, then 
			\[
			S_1(\alpha,\sqrt{x})=\frac{G(a,0,q)}{q}\sqrt{x}\int_{0}^{1}e(x\beta v^2)dv+O(\sqrt{q}\log(q+1)).
			\]
		\end{lemma}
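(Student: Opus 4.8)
The plan is to separate the residue of $m$ modulo $q$ from the small perturbation $\beta$ by expanding the $q$-periodic function $m\mapsto e(am^2/q)$ into its finite Fourier series over $\Z/q\Z$. This gives $e(am^2/q)=\frac1q\sum_{t=0}^{q-1}G(a,-t,q)\,e(tm/q)$, and hence, writing $U_t:=\sum_{1\le m\le\sqrt x}e\bigl(\beta m^2+\tfrac{t}{q}\,m\bigr)$,
\[
S_1(\alpha,\sqrt x)=\frac1q\sum_{t=0}^{q-1}G(a,-t,q)\,U_t.
\]
Throughout I would use the bound $G(a,b,q)\ll\sqrt q$ recalled above, which applies since $(a,q)=1$. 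The term $t=0$ will yield the asserted main term; the terms $1\le t\le q-1$ will all be absorbed into the error.

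For $t=0$, I would compare $U_0=\sum_{m\le\sqrt x}e(\beta m^2)$ with $\int_0^{\sqrt x}e(\beta t^2)\,dt$ by Euler--Maclaurin summation. The key observation is that, since $|\beta|\le 1/(qQ)$ with $Q=xP^{-1}$ and $P=\exp(C\sqrt{\log x})$ sub-polynomial in $x$, one has $|\beta|x\le P/q\le P$; consequently the first derivative of $e(\beta t^2)$ is $O(|\beta|\sqrt x)=o(1)$ on $[0,\sqrt x]$ and the integral of its second derivative is $O(|\beta|\sqrt x+\beta^2x^{3/2})=o(1)$. Applying the Euler--Maclaurin formula and integrating the resulting sawtooth integral by parts once therefore gives
\[
U_0=\int_0^{\sqrt x}e(\beta t^2)\,dt+O(1)=\sqrt x\int_0^1 e(x\beta v^2)\,dv+O(1)
\]
after the substitution $t=v\sqrt x$; multiplying by $G(a,0,q)/q\ll q^{-1/2}$ reproduces the main term of the lemma with error $O(q^{-1/2})$.

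For $1\le t\le q-1$, the sum $U_t$ is close to linear: its phase $\beta m^2+\frac tq m$ has derivative $2\beta m+\frac tq$, which by the same smallness of $|\beta|\sqrt x$ differs from $t/q$ by $o(1/q)$, is monotone in $m$, and therefore stays at distance $\gg\min(t,q-t)/q$ from the integers. The Kusmin--Landau first-derivative estimate then yields $U_t\ll q/\min(t,q-t)$. Combining with $|G(a,-t,q)|\ll\sqrt q$,
\[
\frac1q\sum_{t=1}^{q-1}|G(a,-t,q)|\,|U_t|\ll\frac{\sqrt q}{q}\sum_{t=1}^{q-1}\frac{q}{\min(t,q-t)}\ll\sqrt q\log q,
\]
and together with the $t=0$ contribution this gives $S_1(\alpha,\sqrt x)=\frac{G(a,0,q)}{q}\sqrt x\int_0^1 e(x\beta v^2)\,dv+O(\sqrt q\log(q+1))$, the $\log(q+1)$ absorbing the trivial case $q=1$ (where only $t=0$ is present).

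I expect the $t=0$ step to be the main difficulty: a crude comparison of $U_0$ with its integral loses a factor $|\beta|x=O(P/q)$, which is far too large for small $q$, so one really must exploit that $P$ is sub-polynomial in $x$ to push the error down to $O(1)$ via the second-order Euler--Maclaurin expansion. A secondary technical point is verifying the hypotheses of the first-derivative test for $U_t$ uniformly in $t$ — in particular for $t$ near $q/2$, where one must check that the $o(1/q)$ perturbation of the derivative does not destroy the distance-to-integer lower bound.
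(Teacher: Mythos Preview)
The paper does not actually supply a proof of this lemma: it simply refers the reader to \cite[Lemma~4.1]{hu}. So there is nothing in the paper to compare your argument against.

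Your proof is correct. The finite Fourier expansion $e(am^2/q)=q^{-1}\sum_{t}G(a,-t,q)e(tm/q)$ is valid, the second-order Euler--Maclaurin step for $U_0$ genuinely brings the error down to $O(1)$ (your bounds $|\beta|\sqrt{x}\le P/(q\sqrt{x})=o(1)$ and $\beta^2x^{3/2}\le P^2/(q^2\sqrt{x})=o(1)$ are exactly what is needed), and the Kusmin--Landau bound on $U_t$ for $1\le t\le q-1$ goes through since the perturbation $2|\beta|\sqrt{x}\le 2P/(q\sqrt{x})$ is $o(1/q)$ uniformly. One small slip in your closing remarks: the delicate values of $t$ for the first-derivative test are $t=1$ and $t=q-1$ (where $\|t/q\|=1/q$), not $t$ near $q/2$ (where $\|t/q\|$ is largest); but your $o(1/q)$ perturbation bound handles precisely those extreme cases, so the argument is unaffected.

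For context, the more common textbook route (and likely Hu's) is to sort $m$ by its residue class modulo $q$, writing $m=ql+s$ so that $e(am^2/q)=e(as^2/q)$, and then to approximate each inner sum $\sum_l e(\beta(ql+s)^2)$ by $q^{-1}\int_0^{\sqrt{x}}e(\beta t^2)\,dt$ via Euler--Maclaurin; summing over $s$ recovers $G(a,0,q)$ directly. Your Fourier-dual approach reaches the same destination with comparable effort; its main advantage is that the error analysis is cleanly separated into the single ``smooth'' mode $t=0$ and the ``oscillatory'' modes $t\neq 0$, at the cost of needing the Gauss-sum bound $G(a,b,q)\ll\sqrt{q}$ for all $b$ rather than just $b=0$.
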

		\begin{lemma}\label{lemma 2}
			\textbf{(Estimate of $S_1(\alpha,\sqrt{x})$ on minor arcs)}\\
			Let $\alpha=\frac{a}{q}+\beta\in \textit{m}$ with $1\leq a\leq q$, $(a,q)=1$ and $\beta\leq \frac{1}{qQ}$. 
			Hence,
			\[
			\begin{split}
			S_1(\alpha,\sqrt{x})&\ll x^{1/2}q^{-1/2}+q^{1/2}(\log q)^{1/2}+x^{1/4}(\log q)^{1/2}.
			\end{split}
			\]
		\end{lemma}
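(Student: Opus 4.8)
The plan is to treat $S_1(\alpha,\sqrt x)=\sum_{1\le m\le\sqrt x}e(m^2\alpha)$ as a quadratic Weyl sum and run the classical one-step differencing argument, after which the rational approximation $\alpha=a/q+\beta$ feeds into the standard averaged estimate for $\|\cdot\|^{-1}$. Throughout, $\|t\|$ denotes the distance from $t$ to the nearest integer, $N=\lfloor\sqrt x\rfloor$, and we may assume $q\ge 2$, since for $q=1$ the first term $x^{1/2}q^{-1/2}$ already exceeds the trivial bound $|S_1(\alpha,\sqrt x)|\le N$.

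First I would square and difference. Expanding the square of $S_1(\alpha,\sqrt x)$ over the pairs $1\le m,m'\le N$ and substituting $m=m'+h$ gives
\[
|S_1(\alpha,\sqrt x)|^2=\sum_{|h|<N}e(\alpha h^2)\sum_{m'\in I_h}e(2\alpha hm'),
\]
where $I_h\subseteq[1,N]$ is an interval. The factor $e(\alpha h^2)$ has modulus $1$; the term $h=0$ contributes $N$; and for $h\ne0$ the inner sum is a geometric progression of length at most $N$, hence $\ll\min\bigl(N,\|2\alpha h\|^{-1}\bigr)$. Using the symmetry $h\mapsto-h$,
\[
|S_1(\alpha,\sqrt x)|^2\ll N+\sum_{1\le h\le N}\min\!\left(N,\frac{1}{\|2\alpha h\|}\right).
\]

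Next I would bring in the rational approximation. Passing to $2\alpha$, put $q'=q$ if $q$ is odd and $q'=q/2$ if $q$ is even, so that $2\alpha=a'/q'+\beta'$ with $(a',q')=1$ and $q'\asymp q$; since $q\le Q$ (the pair $(a,q)$ being the Dirichlet approximant with denominator bound $Q$), one checks in both cases that $|\beta'|=2|\beta|\le 2/(q')^2$. The standard averaged bound for linear exponential sums --- for $\gamma=a/q+\beta$, $(a,q)=1$, $|\beta|\ll 1/q^2$, one has $\sum_{1\le h\le H}\min(U,\|\gamma h\|^{-1})\ll(1+H/q)(U+q\log q)$; see e.g. \cite{hu} --- applied with $\gamma=2\alpha$, $H=U=N$ yields
\[
\sum_{1\le h\le N}\min\!\left(N,\frac{1}{\|2\alpha h\|}\right)\ll\left(1+\frac{N}{q}\right)\bigl(N+q\log q\bigr)\ll\frac{N^2}{q}+N\log q+q\log q .
\]
Combining this with the previous display, absorbing the stray $N$ (which is $\le N^2/q$ if $q\le N$ and $\le q\log q$ if $q>N$) and using $N\asymp x^{1/2}$, we obtain
\[
|S_1(\alpha,\sqrt x)|^2\ll\frac{x}{q}+x^{1/2}\log q+q\log q ,
\]
and taking square roots via $\sqrt{A+B+C}\le\sqrt A+\sqrt B+\sqrt C$ gives exactly $S_1(\alpha,\sqrt x)\ll x^{1/2}q^{-1/2}+x^{1/4}(\log q)^{1/2}+q^{1/2}(\log q)^{1/2}$.

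I do not expect a real obstacle here --- this is a textbook Weyl estimate --- but the one point needing care is the passage from $\alpha$ to $2\alpha$: one must keep track of the halving of the denominator when $q$ is even and confirm that the perturbation $\beta'=2\beta$ stays $\ll 1/(q')^2$, so that the averaged $\min$-bound genuinely applies with a modulus $q'\asymp q$. It is also worth noting that the hypothesis $\alpha\in m$ plays no role in deriving the bound itself; it only serves to guarantee $q>P$, which is relevant solely when the lemma is applied later on the minor arcs.
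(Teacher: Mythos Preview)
Your argument is correct: it is the classical Weyl differencing for quadratic exponential sums, and the one delicate point you flag --- the passage from $\alpha$ to $2\alpha$ --- is handled properly (alternatively, one may sidestep it via $\sum_{h\le N}\min(N,\|2\alpha h\|^{-1})\le\sum_{k\le 2N}\min(N,\|\alpha k\|^{-1})$ and apply the averaged $\min$-bound directly with modulus $q$). The paper does not supply its own proof of this lemma but simply refers to \cite{hu}, Lemma~4.2, and the standard approach there is precisely the Weyl--Vinogradov argument you have written out.
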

		\begin{remark}
			By Lemma \ref{lemma 2}, we have 
			\[
			S_1(\alpha,\sqrt{x})\ll x^{1/2}P^{-1/2}=x^{1/2}\exp(-\frac{C}{2}\sqrt{\log x}),
			\]
			recalling that, on minor arcs, $P\leq q\leq Q$ and the value of $P$.
		\end{remark}
		Let now $\chi$ be a Dirichlet character modulo $q$ and define 
		\[
		\psi_f(x,\chi)=\sum_{n\leq x}a(n)\chi(n)\Lambda(n).
		\]
		\begin{lemma}\label{lemma 3}
			There exists a constant $A>0$ such that 
				\[
				\psi_f(x,\chi)\ll \sqrt{q}x\exp(-A\sqrt{\log x}).
				\]
		\end{lemma}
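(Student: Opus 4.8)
The plan is to prove Lemma \ref{lemma 3} by reducing the weighted sum $\psi_f(x,\chi)$ to a contour integral involving the logarithmic derivative of the twisted $L$-function $L(s,f\otimes\chi)$, and then exploiting a zero-free region together with the crucial absence of Siegel zeros for $GL(2)$. Concretely, I would start from the identity
\[
\psi_f(x,\chi)=\frac{1}{2\pi i}\int_{(\sigma_0)}\left(-\frac{L'}{L}(s,f\otimes\chi)\right)\frac{x^s}{s}\,ds
\]
for a suitable $\sigma_0>1$, possibly after passing to a primitive character inducing $\chi$ at the cost of a negligible error (the finitely many ramified Euler factors contribute $O(\log q)$ to the weighted sum). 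One must be slightly careful here since $-L'/L(s,f\otimes\chi)=\sum_n a(n)\chi(n)\Lambda(n)n^{-s}$ requires $a(n)$ to be multiplicative with the right Dirichlet-series shape; this is exactly where we use that $f$ is a Hecke (primitive) form, so that $L(s,f\otimes\chi)$ has an Euler product and the completed $L$-function has a standard functional equation of degree $2$ and conductor $\ll q^2$ (times the fixed conductor of $f$).

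The main analytic input is then a standard zero-free region of the form $\sigma>1-c/\log(q(|t|+2))$ for $L(s,f\otimes\chi)$, together with the statement that there is \emph{no} exceptional (Siegel) real zero — this is precisely the Hoffstein--Ramakrishnan result \cite[Section 5]{h-r} and the packaging in \cite{f-g} referenced in the introduction. With no Siegel zero to worry about, the contour shift is completely classical: move the line of integration to $\sigma=1-c'/\log x$ (or rather a contour that follows the boundary of the zero-free region), bound $L'/L$ on this contour by $O(\log^2(q x))$ using the Borel--Carath\'eodory / Hadamard three-circles machinery (or by quoting the standard bound for $GL(2)$ $L$-functions), and estimate the horizontal and vertical pieces. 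The truncated Perron formula introduces the usual error terms in $T$, which are optimized by taking $T$ a suitable power of $x$; balancing everything yields the main term $O(x\exp(-A'\sqrt{\log x}))$ with the dependence on $q$ entering only through the $\sqrt q$ coming from the conductor in the functional-equation/convexity bounds and the $\log q$ losses, all of which are absorbed into $\sqrt q \, x\exp(-A\sqrt{\log x})$ after adjusting constants (since $q\le P=\exp(C\sqrt{\log x})$, any fixed power of $\log q$ is harmless).

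The step I expect to be the main obstacle is obtaining a clean, fully uniform-in-$q$ bound for $L'/L(s,f\otimes\chi)$ near the line $\Re s=1$, i.e. turning the qualitative ``no Siegel zero'' into a quantitative zero-free region with explicit dependence on the conductor $q$, and doing so uniformly over all characters $\chi\bmod q$ and all $q\le P$. This requires either citing \cite{f-g} in precisely the right form or reproducing the log-free (or nearly log-free) zero-density input; the subtlety is that for a single $GL(2)$ form twisted by characters one does not automatically get the same strength as the classical Dirichlet $L$-function theory, and the Hoffstein--Ramakrishnan argument is what repairs this. A secondary technical point is handling imprimitive $\chi$ and the finitely many bad Euler factors of $f$ itself, but that is routine. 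Once the zero-free region with the $\sqrt q$-type conductor dependence is in hand, the rest is the textbook Perron-plus-contour-shift computation, and the stated bound follows.
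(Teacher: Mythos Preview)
Your proposal is correct and coincides with the paper's approach: the paper does not give a self-contained argument but simply cites \cite[Theorem~4.1 and formula~(28)]{f-g}, whose proof is precisely the Perron-formula-plus-contour-shift into the Hoffstein--Ramakrishnan zero-free region (with no exceptional zero) that you sketch. One minor caveat worth making explicit: the Dirichlet coefficients of $-L'/L(s,f\otimes\chi)$ are not literally $a(n)\chi(n)\Lambda(n)$, but the discrepancy is supported on prime powers $p^k$ with $k\ge 2$ and contributes only $O(x^{1/2+\theta+\epsilon})$, which is negligible against the stated bound.
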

		\begin{proof}
		We will not give the details of the proof. The bound can be easily deduced from \cite[Theorem 4.1]{f-g}. In particular, it follows with standard methods by formula $(28)$, which is essentially based on the zero-free region proved by Hoffstein-Ramakrishnan (see \cite[Theorem C, part (3)]{h-r}) and the consequent absence of exceptional zeros.   
 \end{proof}
		
		We can now give an estimate of $T(\alpha,x)$ on major arcs. 
		\begin{remark}In the below lemma, we will use the following identity. For $(a,m)=1$, 
		\begin{equation}\label{3.1}
		e\bigg(\frac{a}{m}\bigg)=\frac{1}{\varphi(m)}\sum_{\chi\pmod m}\bar{\chi}(a)\tau(\chi),
		\end{equation}
		where $\tau(\chi)$ is the Gauss sum defined as
		\begin{equation}\label{3.2}
		\tau(\chi)=\sum_{b\pmod m}\chi(b)e\bigg(\frac{b}{m}\bigg).
		\end{equation}
		Moreover, note that
		\begin{equation}\label{3.3}
		\chi(a)\tau(\bar{\chi})=\sum_{b\pmod m}\bar{\chi}(b)e\bigg(\frac{ab}{m}\bigg).
		\end{equation}
		For details, one could see \cite[Section 3.4]{i-k}. 
	\end{remark}
	\begin{lemma}\label{lemma 4}\textbf{(Estimate of $T(\alpha,x)$ on major arcs)}\\
		Let $1\leq q\leq P$, $1\leq a\leq q$, $(a,q)=1$ and $\alpha=\frac{a}{q}+\beta\in M(a,q)$ with $|\beta|\leq \frac{1}{qQ}$. Then, there exists a constant $B>0$ such that
		\[
		T(\alpha,x)\ll x\exp(-B\sqrt{\log x}).
		\]
	\end{lemma}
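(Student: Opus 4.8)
The plan is to detect the approximation $a/q$ by means of Dirichlet characters modulo $q$ and then reduce everything to the function $\psi_f$ estimated in Lemma \ref{lemma 3}. Writing $\alpha=\frac{a}{q}+\beta$ we have $T(\alpha,x)=\sum_{n\le x}a(n)\Lambda(n)e(na/q)\,e(n\beta)$. Since $\Lambda$ is supported on prime powers, the terms with $(n,q)>1$ come only from $n=p^k\le x$ with $p\mid q$; there are $O((\log x)^2)$ of these, each of size $\ll x^{\theta+\epsilon}\log x$, so their total contribution is $O(x^{\theta+\epsilon})$ (after renaming $\epsilon$) and is negligible against the target bound. For the remaining terms, where $(n,q)=1$, I would apply the identity \eqref{3.1} with $a$ replaced by $na$ (observe that $(na,q)=1$), obtaining
\[
T(\alpha,x)=\frac{1}{\varphi(q)}\sum_{\chi\pmod q}\bar\chi(a)\,\tau(\chi)\sum_{n\le x}a(n)\bar\chi(n)\Lambda(n)e(n\beta)+O(x^{\theta+\epsilon}),
\]
where the full range $n\le x$ may be kept in the inner sum because $\bar\chi(n)=0$ whenever $(n,q)>1$.

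Next I would strip off the factor $e(n\beta)$ by partial summation, so that the inner sum is bounded by $|\psi_f(x,\bar\chi)|+2\pi|\beta|\int_1^x|\psi_f(t,\bar\chi)|\,dt$. Lemma \ref{lemma 3}, applied uniformly for $t\le x$ (the range of bounded $t$ being trivial), gives $|\psi_f(t,\bar\chi)|\ll\sqrt q\,t\exp(-A\sqrt{\log t})$; since $t\exp(-A\sqrt{\log t})$ is increasing for $t$ large, this leads to $\int_1^x|\psi_f(t,\bar\chi)|\,dt\ll\sqrt q\,x^2(\log x)\exp(-A\sqrt{\log x})$. Using $|\beta|\le\frac{1}{qQ}$ together with the relation $x^2/Q=xP$, I would arrive at
\[
\sum_{n\le x}a(n)\bar\chi(n)\Lambda(n)e(n\beta)\ll\sqrt q\,x\exp(-A\sqrt{\log x})+\frac{xP\log x}{\sqrt q}\,\exp(-A\sqrt{\log x}).
\]

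Inserting this into the character sum and using $|\tau(\chi)|\le\sqrt q$ together with the fact that there are $\varphi(q)$ characters modulo $q$, the two terms contribute $\ll q\,x\exp(-A\sqrt{\log x})$ and $\ll xP(\log x)\exp(-A\sqrt{\log x})$, respectively. Since on a major arc $q\le P=\exp(C\sqrt{\log x})$, both of these are $\ll x(\log x)\exp((C-A)\sqrt{\log x})$; together with the contribution $O(x^{\theta+\epsilon})$, which is $\ll x\exp(-B\sqrt{\log x})$ for $x$ large, this yields the lemma with any $0<B<A-C$, provided the constant $C$ is small enough — it suffices that $C<A$, where $A$ is the constant from Lemma \ref{lemma 3}.

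The only real difficulty lies in the bookkeeping of constants: the saving $\exp(-A\sqrt{\log x})$ furnished by Lemma \ref{lemma 3} must remain ahead of the losses accumulated along the way, namely a factor $\sqrt q$ from the Gauss sum $\tau(\chi)$, a further $\sqrt q$ left over after cancelling $1/\varphi(q)$ against the sum over characters, and the factor $x^2/Q=xP$ produced by the partial summation. Each of these is at most a fixed power of $P=\exp(C\sqrt{\log x})$, so everything goes through precisely because $C$ is at our disposal and can be taken small relative to $A$; since the same $C$ also governs the minor-arc estimate and the choice of $P$ and $Q$, it must ultimately be fixed small enough to make all of these bounds hold simultaneously.
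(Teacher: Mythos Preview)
Your argument is correct and follows essentially the same route as the paper's own proof: split off the terms with $(n,q)>1$, expand $e(na/q)$ via Dirichlet characters and Gauss sums, remove $e(n\beta)$ by partial summation, and invoke Lemma~\ref{lemma 3}. The only cosmetic difference is that the paper further expands $\tau(\chi)$ as a sum over $b\pmod q$ before estimating trivially (picking up a factor $q^{3/2}$ instead of your $q$), which leads it to the slightly more restrictive condition $C<\tfrac{2}{3}A$ in place of your $C<A$; your extra harmless $\log x$ in the integral bound is absorbed by adjusting $B$.
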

	\begin{proof} We apply equations \eqref{3.1}, \eqref{3.2}, \eqref{3.3}, partial summation and Lemma \ref{lemma 3}, getting
		\[
		\begin{split}
		& T(\alpha,x)=\sum_{n\leq x}a(n)\Lambda(n)e\bigg(\frac{an}{q}\bigg)e(n\beta)=\sum_{\substack{n\leq x\\(n,q)=1}}a(n)\Lambda(n)e\bigg(\frac{an}{q}\bigg)e(n\beta)+\sum_{\substack{n\leq x\\(n,q)>1}}a(n)\Lambda(n)e\bigg(\frac{an}{q}\bigg)e(n\beta)\\
		&=\sum_{\substack{n\leq x\\(n,q)=1}}a(n)\Lambda(n)\bigg(\frac{1}{\varphi(q)}\sum_{\chi\pmod q}\bar{\chi}(an)\tau(\chi)\bigg)e(n\beta)+O(x^{\theta+\epsilon}\log^2 x)\\
		&=\sum_{\substack{n\leq x\\(n,q)=1}}a(n)\Lambda(n)\bigg(\frac{1}{\varphi(q)}\sum_{\chi\pmod q}\bar{\chi}(n)\sum_{b\pmod q}\chi(b)e\bigg(\frac{ab}{q}\bigg)\bigg)e(n\beta)+O(x^{\theta+\epsilon}\log^2 x)\\
		&=\sum_{b\pmod q}e\bigg(\frac{ab}{q}\bigg)\bigg(\frac{1}{\varphi(q)}\sum_{\chi\pmod q}\chi(b)\sum_{n\leq x}a(n)\bar{\chi}(n)\Lambda(n)e(\beta n)\bigg)+O(x^{\theta+\varepsilon}\log^2 x)\\
		&=\sum_{b\pmod q}e\bigg(\frac{ab}{q}\bigg)\bigg(\frac{1}{\varphi(q)}\sum_{\chi\pmod q}\chi(b)\bigg(\psi_f(x,\bar{\chi})e(\beta x)-2\pi i\beta\int_{1}^{x}\psi_f(u,\bar{\chi})e(\beta u)du\bigg)\bigg)+O(x^{\theta+\varepsilon}\log^2 x)\\
		&\ll\sum_{b\pmod q}(1+|\beta|x)\sqrt{q}x\exp(-A\sqrt{\log x})\ll xP^{3/2}\exp(-A\sqrt{\log x})\ll x\exp(-B\sqrt{\log x}),
\end{split}
		\] 
		where $B=A-\frac{3}{2}C$ and $A$ is the constant of Lemma \ref{lemma 3}. Hence, as a first restriction on $C$ we impose 
		\begin{equation}\label{3.4}
		A-\frac{3}{2}C>0 \Longleftrightarrow C<\frac{2}{3}A.		\end{equation}
		Moreover, observe that the term $O(x^{\theta+\epsilon}\log^2 x)$ is due to the values of $n$ such that $(n,q)>1$. In fact
		\begin{equation}\label{3.5}
		\sum_{\substack{n\leq x\\(n,q)>1}}a(n)\Lambda(n)e(n\alpha)\ll \sum_{\substack{n\leq x\\(n,q)>1}}a(n)\Lambda(n)\ll x^{\theta+\epsilon}\sum_{\substack{n\leq x\\(n,q)>1}}\Lambda(n)\ll x^{\theta+\epsilon}\log^2x.
		\end{equation}
\end{proof}
	\begin{remark}
		In \cite{hu}, the author claims that, simply normalising the coefficients, it is possible to generalise \cite[Theorem 1]{per} to the case of Maass forms. However, the proof of that result is strongly based on Ramanujan's conjecture, so we are not sure of the estimate stated in \cite[Lemma 5.1]{hu}. For this reason, our argument for $T(\alpha,x)$ on major arcs follows a different approach.   
	\end{remark}
	\section{Proof of the main result}
	We are now ready to prove Theorem \ref{theorem 1}. We start considering the major arcs. By construction
	\[
	\int_{\textit{M}}S^3_1(\alpha,\sqrt{x})T(-\alpha,x)d\alpha=\sum_{1\leq q\leq P}\sum_{\substack{a=1\\(a,q)=1}}^{q}\int_{\frac{a}{q}-\frac{1}{qQ}}^{\frac{a}{q}+\frac{1}{qQ}}S^3_1(\alpha,\sqrt{x})T(-\alpha,x)d\alpha.
	\]
	Assume now, with the usual meaning, $\alpha\in \textit{M}(a,q)$. By Lemma \ref{lemma 1}, developing the cube and observing that a priori two of the terms dominate the others, 
	\[
	S^3_1(\alpha,\sqrt{x})\ll \frac{G^3(a,0,q)}{q^3}x^{3/2}+q^{3/2}\log^3(q+1)\ll x^{3/2}q^{-3/2}. 
	\]
	Hence, the bound obtained in Lemma \ref{lemma 4} gives
	\[
	S^3_1(\alpha,\sqrt{x})T(-\alpha,x)\ll x^{3/2}q^{-3/2}x\exp(-B\sqrt{\log x})=x^{5/2}q^{-3/2}\exp(-B\sqrt{\log x}). 
	\]
	Now, recalling that the length of the interval $\textit{M}(a,q)$ is $2(qQ)^{-1}$, 
	\[
	\int_{\frac{a}{q}-\frac{1}{qQ}}^{\frac{a}{q}+\frac{1}{qQ}}S^3_1(\alpha,\sqrt{x})T(-\alpha,x)d\alpha
	\ll (qQ)^{-1}x^{5/2}q^{-3/2}\exp(-B\sqrt{\log x})=q^{-1}x^{3/2}q^{-3/2}P\exp(-B\sqrt{\log x}),
	\]
	having $Q=xP^{-1}$, so $Q^{-1}=x^{-1}P$. We sum over the residue classes modulo $q$, obtaining
	\[
	\sum_{\substack{a=1\\(a,q)=1}}^{q}\int_{\frac{a}{q}-\frac{1}{qQ}}^{\frac{a}{q}+\frac{1}{qQ}}S^3_1(\alpha,\sqrt{x})T(-\alpha,x)d\alpha
	\ll x^{3/2}q^{-3/2}P\exp(-B\sqrt{\log x}),
	\]
 and finally over $1\leq q\leq P$. Observing that $\sum_{q=1}^{P}q^{-3/2}=O(1)$,
	\begin{equation}\label{4.1}
	\int_{M}S^3_1(\alpha,\sqrt{x})T(-\alpha,x)d\alpha\ll x^{3/2}P\exp(-B\sqrt{\log x})=x^{3/2}\exp(-(B-C)\sqrt{\log x}),
	\end{equation}
	since $P=\exp(C\sqrt{\log x})$. Note that, to our purpose we have to impose $B-C>0$, so
	\begin{equation}\label{4.2}
	C<B=A-\frac{3}{2}C\Longleftrightarrow C<\frac{2}{5}A.
	\end{equation}
	We still have to find an estimate for 
	\[
	\int_{\textit{m}}S^3_1(\alpha,\sqrt{x})T(-\alpha,x)d\alpha.
	\]
	Using Cauchy inequality, 
	\[
	\begin{split}
	\int_{\textit{m}}S^3_1(\alpha,\sqrt{x})T(-\alpha,x)d\alpha&\ll \max_{\textit{m}}|S_1(\alpha,\sqrt{x})|\int_{0}^{1}|S_1(\alpha,\sqrt{x})|^2|T(-\alpha,x)|d\alpha\\
	&\ll \max_{\textit{m}}|S_1(\alpha,\sqrt{x})|\bigg(\int_{0}^{1}|S_1(\alpha,\sqrt{x})|^4d\alpha\bigg)^{1/2}\bigg(\int_{0}^{1}|T(-\alpha,x)|^2d\alpha\bigg)^{1/2}.
	\end{split}
	\] 
	Now, as observed in Lemma \ref{lemma 2}, 
	\begin{equation}
	\max_{\textit{m}}|S_1(\alpha,\sqrt{x})|\ll x^{1/2}P^{-1/2}=x^{1/2}\exp(-\frac{C}{2}\sqrt{\log x}).
	\end{equation}
	Moreover, since for Maass forms the following average bound holds
	\[
	\sum_{n\leq x}|a(n)|^2\ll x,
	\]
	we have
	\begin{equation}
	\int_{0}^{1}|T(-\alpha,x)|^2d\alpha\ll \sum_{n\leq x}a^2(n)\Lambda^2(n)\ll x\log^2 x.
	\end{equation}
	Finally,
	\begin{equation}
	\begin{split}
	\int_{0}^{1}|S_1(\alpha,\sqrt{x})|^4d\alpha&=\int_{0}^{1} \sum_{1\leq m_1,m_2\leq \sqrt{x}}e((m^2_1-m^2_2)\alpha)\sum_{1\leq m_3,m_4\leq \sqrt{x}}e((m^2_3-m^2_4)\alpha)d\alpha\\
	&=\sum_{\substack{1\leq m_i\leq \sqrt{x}\\i=1,2,3,4}}\int_{0}^{1}e((m^2_1+m^2_3-m^2_2-m^2_4)\alpha)d\alpha\\&=\sum_{\substack{1\leq m_i\leq \sqrt{x}\\i=1,2,3,4\\m^2_1-m^2_2=m^2_4-m^2_3}}1\ll\sum_{n\leq x}d^2(n)\ll x\log^3x, 
	\end{split}
	\end{equation}
	where $d(n)$ is the number of divisors of $n$.
	Then, by the above estimates, 
	\begin{equation}\label{4.6}
	\begin{split}
	\int_{\textit{m}}S^3_1(\alpha,\sqrt{x})T(-\alpha,\sqrt{x})d\alpha&\ll x^{1/2}P^{-1/2}x^{1/2}\log x x^{1/2}\log^{3/2}x\\&=x^{3/2}\log^{5/2}x\exp(-\frac{C}{2}\sqrt{\log x})\ll x^{3/2}\exp(-C'\sqrt{\log x}),
	\end{split}
	\end{equation}
	where $C'$ is a positive constant such that $C'<\frac{C}{2}$.\\
	Now, equations \eqref{4.1} and \eqref{4.6} give
	\[
	\pi_{a,\Lambda}(x)\ll x^{3/2}\exp(-(B-C)\sqrt{\log x})+x^{3/2}\exp(-C'\sqrt{\log x}).
	\]
	Hence, combining \eqref{3.4} and \eqref{4.2}, we choose $P=\exp(C\sqrt{\log x})$, with $C<\frac{2}{5}A$, getting
	\[
	\pi_{a,\Lambda}(x)\ll x^{3/2}\exp(-c\sqrt{\log x}),
	\]
	where $c=\min(B-C, C')=\min(A-\frac{5}{2}C,C')$. \\
	
	To conclude, note that a similar argument can also be applied to improve the estimate in  \cite[Theorem 2]{per}. In fact, the statement can be reformulated as follows, according to the notation of \cite{per}.
	\begin{theorem}
		There exists a positive constant $c'$ such that
		\[
		r_{\tau}(N)=\sum_{n_1+n_2+n_3=N}\tau(n_1)\Lambda(n_1)\tau(n_2)\Lambda(n_2)\tau(n_3)\Lambda(n_3)\ll N^{37/2}\exp(-c'\sqrt{\log N}).
		\]
	\end{theorem}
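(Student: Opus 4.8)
The plan is to run the circle method exactly as in Sections 2--4, with the linear ternary problem in place of the one with squares and with $\tau(n)=n^{11/2}a(n)$ in place of the normalised coefficients $a(n)$. Set
\[
T_\tau(\alpha,y)=\sum_{n\le y}\tau(n)\Lambda(n)e(n\alpha),
\]
so that orthogonality gives $r_\tau(N)=\int_{0}^{1}T_\tau(\alpha,N)^{3}e(-N\alpha)\,d\alpha$; unlike in Theorem~\ref{theorem 1}, there is no auxiliary Weyl sum of type $S_1$. With $P=\exp(C\sqrt{\log N})$ and $Q=NP^{-1}$, Dirichlet's theorem splits $[1/Q,1+1/Q]$ into the major arcs $\textit{M}$ (the union of the $M(a,q)$ with $1\le q\le P$, $(a,q)=1$) and the minor arcs $\textit{m}$, so that
\[
r_\tau(N)=\int_{\textit{M}}T_\tau(\alpha,N)^{3}e(-N\alpha)\,d\alpha+\int_{\textit{m}}T_\tau(\alpha,N)^{3}e(-N\alpha)\,d\alpha .
\]

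For the major arcs the key ingredient is the analogue of Lemma~\ref{lemma 3} for $\tau$: applying Lemma~\ref{lemma 3} to $a(n)=\tau(n)n^{-11/2}$ and summing by parts gives, for some $A>0$,
\[
\psi_\tau(x,\chi):=\sum_{n\le x}\tau(n)\chi(n)\Lambda(n)\ll\sqrt{q}\,x^{13/2}\exp(-A\sqrt{\log x}).
\]
Feeding this into the character-sum manipulation of Lemma~\ref{lemma 4} verbatim --- identities \eqref{3.1}--\eqref{3.3}, partial summation, and the estimate \eqref{3.5} with $\tau$ replacing $a(n)$, so that the terms with $(n,q)>1$ contribute only $O(N^{11/2+\epsilon})$ --- yields, for $\alpha=a/q+\beta\in M(a,q)$ with $1\le q\le P$,
\[
T_\tau(\alpha,N)\ll N^{13/2}\exp(-B\sqrt{\log N}),\qquad B=A-\tfrac32 C .
\]
Since $\sum_{n\le x}|a(n)|^{2}\ll x$, partial summation gives $\sum_{n\le x}\tau(n)^{2}\ll x^{12}$, so that
\[
\int_{0}^{1}|T_\tau(\alpha,N)|^{2}\,d\alpha=\sum_{n\le N}\tau(n)^{2}\Lambda(n)^{2}\ll N^{12}\log^{2}N ,
\]
and therefore, writing $|T_\tau|^{3}=|T_\tau|\cdot|T_\tau|^{2}$ and pulling out the maximum over $\textit{M}$,
\begin{align*}
\int_{\textit{M}}T_\tau(\alpha,N)^{3}e(-N\alpha)\,d\alpha
&\ll\Big(\max_{\textit{M}}|T_\tau(\alpha,N)|\Big)\int_{0}^{1}|T_\tau(\alpha,N)|^{2}\,d\alpha\\
&\ll N^{13/2}\exp(-B\sqrt{\log N})\cdot N^{12}\log^{2}N\ll N^{37/2}\exp(-B'\sqrt{\log N})
\end{align*}
for any $0<B'<B$, which is already of the required shape, provided $B>0$, i.e.\ $C<\tfrac23 A$ (compare \eqref{3.4}).

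The minor arcs are handled by a classical Vinogradov-type argument, unaffected by the present improvement. Since $q\ge P$ on $\textit{m}$, Vaughan's identity together with the classical bounds for $\sum_{n\le y}\tau(n)e(n\alpha)$ over intervals (Wilton), applied after using the Hecke relations for $\tau$ (such as $\tau(dk)=\sum_{\ell\mid(d,k)}\mu(\ell)\ell^{11}\tau(d/\ell)\tau(k/\ell)$), gives
\[
T_\tau(\alpha,N)\ll N^{13/2}\exp(-c\sqrt{\log N})
\]
uniformly on $\textit{m}$ --- the analogue of the bound $S_1(\alpha,\sqrt{x})\ll x^{1/2}P^{-1/2}$ used for Theorem~\ref{theorem 1}. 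Hence
\[
\int_{\textit{m}}T_\tau(\alpha,N)^{3}e(-N\alpha)\,d\alpha\ll\Big(\max_{\textit{m}}|T_\tau(\alpha,N)|\Big)\int_{0}^{1}|T_\tau(\alpha,N)|^{2}\,d\alpha\ll N^{37/2}\exp(-c'\sqrt{\log N}).
\]
Combining the two estimates gives $r_\tau(N)\ll N^{37/2}\exp(-c''\sqrt{\log N})$ with $c''=\min(B',c')>0$ for $C<\tfrac23 A$, which is the theorem.

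I expect the only genuinely new point to be the analogue of Lemma~\ref{lemma 3} above: transferring, through \cite[Theorem~4.1]{f-g}, the Hoffstein--Ramakrishnan zero-free region to a bound for $\psi_\tau(x,\chi)$, and tracking the partial summation that passes from $a(n)$ to $\tau(n)=n^{11/2}a(n)$. Once this is in place the major-arc step merely copies Lemma~\ref{lemma 4} and the computation leading to \eqref{4.1}, while the minor arcs rely only on classical input (Vaughan's identity, Wilton's bound and the Hecke relations) already available in \cite{per} and do not use the square-sum estimates of Lemmas~\ref{lemma 1} and \ref{lemma 2}; the only mild subtlety there is combining Vaughan's identity with the non-complete-multiplicativity of $\tau$.
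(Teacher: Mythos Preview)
Your proposal is correct and follows essentially the same line as the paper's own argument. The paper's discussion of this theorem is very brief: it observes that the only new ingredient is the analogue of Lemma~\ref{lemma 3} for $\tau$ (via Hoffstein--Ramakrishnan, since $\tau$ is attached to a holomorphic cusp form), which upgrades Perelli's Corollary to $S_\tau(\alpha)\ll N^{13/2}\exp(-C\sqrt{\log N})$, and then says ``the result then follows as in \cite{per}''. You have spelled out exactly this: the major-arc pointwise bound comes from the character decomposition of Lemma~\ref{lemma 4} fed by the $\psi_\tau(x,\chi)$ estimate, the minor-arc pointwise bound is the classical Vaughan/Wilton/Hecke input already in \cite{per}, and the passage to $r_\tau(N)$ via $\max|T_\tau|\cdot\int|T_\tau|^2$ is the standard way Perelli completes the argument. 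Your identification of ``the only genuinely new point'' matches the paper verbatim.
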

	The key point is again Hoffstein-Ramakrishnan's result, since Ramanujan's $\tau$ function is a holomorphic cusp form of level $12$. This assures the non-existence of exceptional zeros and then an analogous version of Lemma \ref{lemma 3} even holds in this case. We can apply the circle method choosing $P=\exp(A'\sqrt{\log N})$, where $A'>0$ depends on the constant $A$ of Lemma \ref{lemma 3}. As a consequence, the Corollary in \cite{per} gives
	\[
	S_{\tau}(\alpha)\ll N^{13/2}\exp(-C\sqrt{\log N}),
	\]
	where again the positive constant $C$ depends on $A$ and 
	\[S_{\tau}(\alpha)=\sum_{n\leq N}\tau(n)\Lambda(n)e(n\alpha).\]
	The result then follows as in \cite{per} (the constant $c'$ of the statement will depend on $A$).


\begin{thebibliography}{99}
		
		\bibitem{f-g} E. Fouvry and S. Ganguly. \emph{Orthogonality between the M\"{o}bius function, additive
			characters, and Fourier coefficients of cusp forms}, Compos. Math. (5)150
		(2014), 763–797.
		
		\bibitem{h-r}  J. Hoffstein, D. Ramakrishnan, \emph{Siegel zeros and cusp forms}, Internat.
		Math. Res. Notices (1995), 279–308.
		
		\bibitem{hu} L. Hu, \emph{Quadratic forms connected with Fourier coefficients of Maass cusp forms}, Front. Math. China 2015, 1101-1112.
	\bibitem{i-k} H. Iwaniec, E. Kowalski, \emph{Analytic Number Theory}, AMS, Providence 2004.
		\bibitem{per}A. Perelli, \emph{On some exponential sums connected with Ramanujan's $\tau$ function}, Mathematika 31 (1984), 150-158.
	\end{thebibliography}
\end{document}